\newtheorem{Theorem}{Theorem}[section]
\newtheorem{Lemma}[Theorem]{Lemma}
\newtheorem{Proposition}[Theorem]{Proposition}
\newtheorem{Definition}[Theorem]{Definition}
\newtheorem{rem}[Theorem]{Remark}
\newtheorem{example}[Theorem]{Example}
\newcommand{\R}{\mathbb R}
\newcommand{\N}{\mathbb N}
\begin{document}
\title{On random pairwise comparisons matrices and their geometry.}
\author{Jean-Pierre Magnot}
\address{CNRS, LAREMA, SFR MATHSTIC, F-49000 Angers, France \\and\\ Lyc\'ee Jeanne d'Arc, Avenue de Grande Bretagne, F-63000 Clermont-Ferrand}
	\email{jean-pierr.magnot@ac-clermont.fr}

\maketitle
\begin{abstract}
We describe a framework for random pairwise comparisons matrices, inspired by selected constructions releted to the so called inconsistency reduction of pairwise comparisons (PC) matrices. In to build up structures on random pairwise comparisons matrices, the set up for (deterministic) PC matrices for non-reciprocal PC matrices is completed. The extension of basic concepts such as inconsistency indexes and geometric mean method are extended to random pairwise comparisons matrices and completed by new notions which seem useful to us. Two procedures for (random) inconsistency reduction are sketched, based on well-known existing objects, and a fiber bundle-like decomposition of random pairwise comparisons is proposed. 
\end{abstract}
\textit{Keywords:}
	Pairwise comparisons, stochastic models,  Abelian Yang-Mills theory, inconsitency, reciprocity.

\textit{MSC (2020):} 90B50, 81T13, 91B06
\tableofcontents
\section*{Introduction}
The wide popularity of the pairwise comparison methods in the field of multi-
criteria decision analysis mostly due to their simplicity and their adaptability. Since it is  a natural approach to compair two by two the items under control, it is not surprising that the first systematic use of pairwise comparisons is
attributed to Ramon Llull \cite{6} during the thirteenth-century.

Very probabily, people interested in comparisons passed from qualitative to quantitative evaluations for the obvious gan in simplicity of interpretation. The twentieth-century
precursor of the quantitative use of pairwise comparison was Thurstone \cite{43} and
the pairwise comparison method has been progressively improved till the groundbreaking work of Saaty \cite{S1977}. In this article Saaty proposed the Analytic Hierarchy Process
(AHP), a new multiple-criteria decision-making method. Thanks to the popularity of AHP the pairwise comparison method has become one of the most
frequently used decision-making techniques. Its numerous applications include most existing fields such as economy (see e.g.\cite{37}), consumer research \cite{14}, management \cite{45}, military science \cite{CW1985,18}, education and science
(see e.g. \cite{32}) among others.

Besides its great popularity, its wide and intensive application in everyday's life highlights tremendouly its weakness with respect to the following to aspects: 
\begin{itemize} 
\item Popularity of the decision-making methods also makes them vulnerable to at-
tacks and manipulations. This problem has been studied by several researchers
including 
Sasaki \cite{41}, on strategic manipulation in the context of group decisions with
pairwise comparisons. Some aspects of decision manipulation in the context of
electoral systems are presented in e.g. \cite{42}.
\item Even in honestly made evaluations, a small change in the scores in the pairwise comparisons \cite{SKE2023} or of the secondary criteria such as the so-called inconsistency index \cite{MMC2023} can change drastically the final result of the ranking procedure.
\end{itemize}
We have recently linked explicitely inconsistency in pairwise comparisons (PC) with Yang-Mills theories, showing a possible (non-Abelian) generalization of the classical framework \cite{Ma2018-1,Ma2018-3}. Our approach is not the only one where pairwise comparisons coefficients are not numerals, see e.g. \cite{BR2022,KSW2016,37,W2019}, but the link with existing lattice-based or distretized gauge theories appears to us very promising, not only because of the very wide literature on the subject, but also because gauge theories also seem to be promising candidates to furnish applications out of the field of physics, see e.g. the amazing \cite{Mac95}, or \cite{S44} in Neuronal network and \cite{FV2012,I2000,Y1999} in finance. We have here to precise that discretization of gauge theories (in order to recover a continuum limit) are very often treated under the lights of Whitney approximations \cite{Wh} of differential forms and connections on trivial principal bundles, see e.g. \cite{AZ1990}, and very rarely from the viewpoint of the discretization commonly proposed by quantum gravity \cite{RV2014} based on the computation of the holonomies (in the physics sense) of the connections along the 1-vertices of the triangulation (see \cite{Ma2018-2} for a rigorous description of the discretization procedure). These are holonomy-like discretized connections that we consider in this work, that can therefore be identified with  function from oriented 1-vertices with values in the structure group, that satisfy so-called reciprocity conditions.

 In this communication, we intend to develop a theory of random pairwise comparisons matrices that was shortly sketched at the end of \cite{Ma2018-3}, and describe the structures of the proposed sets, in order to extend the notions of weight, consistency, inconsistency indicator, in a stochastic context focusing on the very usual choice of $\R_+^*$ as a structure group. Inspired by the correspondence between $\R_+^*-$gauge theories in quantum gravity and classical pairwise comparisons (PC) matrices highlighted in \cite{Ma2018-3}, we follow up this first approach by replacing a pairwise comparisons matrix by a probability on the space of pairwise comparisons matrices, understood as a random variable, in the same spirit as the statistical mechanics replace a particule with fixed position by a probability. In other words, we propose here a first quantization of pairwise comparisons.  This seems necessary to us in view of the variations, dishonest or not, that it is important to modelize in order to have a better control on them as explained before. For this, the necessity of the use of non-reciprocal pairwise comparisons matrices, an extended class of pairwise comparisons that may not follow the logic rule of reciprocity, also appears to us as mandatory.
 
 Therefore, after an oriented summary of the existing framework (of interest for us) in this direction, mostly based on \cite{Ma2018-3}, in order to define safely random pairwise comparisons matrices, we extend the geometry of classical PC matrices understood as an abelian group, to non reciprocal PC matrices which project to the total tangent space of the first group. This enables us to extend the notion of consistency to non-reciprocal (deterministic) PC matrices and define canonical projections on remarkable subsets, inconsistency indicators, reciprocity indicators and so-called total indexes that ``gather'' the two criteria, consistency and reciprocity, in their evaluation. With these deterministic tools at hand, we define their stochatic analogs on random PC matrices and stochastic projections on remarkable subsets of random matrices. Meaningful decompositions of the space of random matrices are described, and two complementary stochastic methods are proposed for reduction of stochastic non-reciprocity, stochastic inconsistency, with in mind the problem of finding out weights for a final evaluation of the items at hand.

\section{A not-so-short introduction to pairwise comparisons as a class of abelian Yang-Mills theory} \label{2.1.}
A paiswise comparisons matrix $(a_{i,j})$ is a $n \times n$ matrix with coefficients in $\R_+^* = \{ x \in \R \, | \, x>0\}$ such that $$\forall i,j, \quad a_{j,i} = a_{i,j}^{-1}.$$
It is easy to explain the inconsistency in pairwise comparisons when we consider cycles of three comparisons, called triad and represented here as $(x,y,z)$,
which do not have the ``morphism of groupoid'' property such as $$x * z \neq y,$$ which reads as $$ xz \neq y $$ in the multiplicative group $\R_+^*.$ 
\noindent The use of ``inconsistency'' has a meaning of a measure of inconsistency in this study; not the concept itself. 
An approach to inconsistency (originated  in \cite{K1993}) can be reduced to a simple observation:
\begin{itemize}
	\item 
	search all triads (which generate all 3 by 3 PC sub matrices) and locate the worst triad with a so-called inconsistency indicator ($ii$),
	\item 
	$ii$ of the worst triad becomes $ii$ of the entire PC matrix.
\end{itemize}

\noindent Expressing it a bit more formally in terms of triads (the upper triangle of a PC sub matrix $3 \times 3$), we have \emph{Koczkodaj's index}, see e.g. \cite{BR2008,KS2014}:
\begin{equation} \label{kii3}Kii(x,y,z) = 1-\min\left \{\frac{y}{xz},\frac{xz}{y}\right \}=1- e^{-\left|\ln\left (\frac{y}{xz}\right )\right |}. \end{equation}
  For the sake of completeness, we notice here that this definition allows us to localize the inconsistency in the matrix PC and it is of a considerable importance for most applications. But this aspect will be worked out in future works, and we prefer here to concentrate of reduction of the inconsistency.
This inconsistency indicator extends to $n \times n$ PC matrices by:
\begin{equation} \label{Kn}
	{ Kii}_n(A)=\max_{(i,j)\in \N_n^2, i<j} \left(1 - e^{-\left|\ln \left( \frac{a_{ij}}{a_{i,i+1}a_{i+1,i+2}\ldots  a_{j-1,j}}  \right)\right| }\right)
\end{equation}
where $\N_n = \{1,\cdots,n\} \subset \N,$
but also by various ways to combine inconsistency of triads, see e.g. the examples studied in the recent work \cite{MMC2023}.

 Another common procedure on pairwise comparisons matrices changes the multiplicative group $(\R^*_+,.)$ to the additive group $(\R,+)$ with the same (obvious) adaptations of the formulas. These transformations, natural from a group-theoric viewpoint and in one-to-one correspondence via logarithmic maps, are actually perceived as no-so natural in the framework of decision theory. Out of psychological reasons, this basically comes from the fact that in the framework of pairwise comparisons, the important procedure consists in finding an efficient way to reduce an inconsistency indicator, which may not be $Kii_3$ or $Kii_n$ but another, which is chosen because more adapted to the contextual problems. For this, one defines \textbf{priority vectors} which intend to give a ``best way'' to reduce in a satisfying manner the inconsistency, that is, the ``score'' of inconsistency produced by the inconsistency index. Therefore, the correspondence between additive and multiplicative pairwise comparisons matrices enables one to produce more simple formulas and algorithms in the additive case.
 This correspondence is the key motivation for e.g. \cite{KSS2020} where an apparently natural way to link any PC matrix with a ``prefered'' consistent one is described. This impression of simplicity and clarity is obtained by the use of basic geometrical methods, mostly the use of orthogonal vector spaces. However, in the actual state of knowledge, the choice any method for inconsistency reduction appears as arbitrary from the viewpoint of applications while almost canonical from the viewpoint of mathematical structures, because the procedure of reduction of inconsistency may produce very far pairwise comparisons matrices that can be controlled by a small enough inconsistency indicator value \cite{MMC2023}. 
 
  Following \cite{Ma2018-1,Ma2018-3}, let $n$ be the dimension of the PC matrices under consideration, and let us consider the $n-$simplex
$$\Delta_n = \Big\{ (x_0,\ldots, x_n) \in \mathbb{R}^{n+1} |$$
$$ \left(\sum_{i=0}^n x_i = 1\right) \wedge \left(\forall i \in \{0,\ldots.n\}, x_i \geq 0\right)\Big\}$$ as a graph, made of $\frac{n(n-1)}{2}$ edges linking $n$ vertices, equipped with a pre-fixed numerotation. Then there is a one-to-one and onto correspondence between edges and the positions of the coefficients in the PC matrix. Hence, each PC matrix $(a_{i,j})$ assigns the coefficient $a_{i,j}$ to the (oriented) edge from the $i-$th vertex to the $j-$th vertex. These are the \textbf{holonomies} of a connection, understood in its discretized analog (in a discretization throuhg holonomy around the oriented 1-vertices of the simplex). Minimizing holonomies is a quantum gravity analog of a classical $\R_+^*-$Yang-Mills theory \cite{RV2014,Seng2011}. Indeed, in the quantum gravity approach \cite{RV2014}, the seek of flatness of the curvature relies in the  minimization of the distance between the loop holonomies and $1,$ which is exactly the basic requirement of reduction of inconsistencies on triads. Following the preliminary note \cite{Ma2018-1}, there is one conceptual problem in the correspondence between Koczkodaj's inconsistency index and the Yang-Mills functional, highlighted from a psychological viewpoint in \cite{MMC2023}.
Koczkodaj's inconsistency indicators read as: 
 $${ Kii}_n(A)=$$
 $$1 - exp \left(-\max_{1\le i<j\le n}\left |\ln \left ( {a_{ij}\over
 		a_{i,i+1}a_{i+1,i+2}\ldots  a_{j-1,j}}  \right )\right |\right).$$
 	In order to recover now the quamtum gravity analog \cite{RV2014} of the Yang-Mills action functional, one replaces $$\max_{1\le i<j\le n}(...)$$ by a quadratic mean. The choice of an inconsistency indicator is not so easy and Koczdodaj's, even if quite simple and easy to understand, cannot be such easily considered as the only interesting indicator for inconsistency in pairwise comparisons, see e.g. Saaty's \cite{BR2008,S1977}.

 	These are all these relations that we now reinforce in this work, by analyzing non-reciprocal pairwise comparisons matrices from this viewpoint with the help of complementary notions. 
 	\section{A theory of non reciprocal pairwise comparisons and weights: deterministic settings}
 	In the rest of the text, $n$ is the dimension of the square matrices under consideration.
 	\subsection{Preliminary settings on $M_n(\R_+^*)$ and its remarkable subsets}
 	We consider here $M_n(\R_+^*)$ ans a smooth manifold of dimension $n^2$ with tangent space $M_n(\R).$ We also define $G_n = (\R_+^*)^n$ (this notation will be explained later). We recall that we note by $PC_n$ the subset of $M_n(\R_+^*)$ made of matrices $(a_{i,j})_{(i,j)\in \N_n^2}$ such that $$\forall (i,j) \in \N_n^2, a_{i,j}=a_{j,i}^{-1}$$ (which implies that $\forall i \in \N_n, \, a_{i,i}=1$)
 	We define the following operations:
 	\begin{enumerate}
 		\item $M_n(\R_+^*)$ is stable for the matrix multiplication in $M_n(\R),$ noted by  $$(M,N) \mapsto MN$$ and we choose to keep the same notation for its restriction to  $M_n(\R_+^*).$
 		\item The group multiplication in $\R_+^*$ extends to $M_n(\R_+^*)$ by identifying the matrices of   $M_n(\R_+^*)$ with its coefficients, that is we make the identification $M_n(\R_+^*) = (\R_+^*)^{n^2}.$ We therefore define a free Lie group structure on $M_n(\R_+^*)$ that we note by $(A,B) \in M_n(\R_+^*)^2 \mapsto C \in M_n(\R_+^*)$ where $C=(c_{i,j})_{(i,j)\in \N_n^2}$ and $$\forall (i,j) \in \N_n^2, \quad c_{i,j} = a_{i,j}b_{i,j}$$ with $A=(a_{i,j})_{(i,j)\in \N_n^2}$ and $B=(b_{i,j})_{(i,j)\in \N_n^2}.$  $PC_n$ and $CPC_n$ are Lie subgroups of $M_n(\R_+^*)$ equipped with this multiplication. 
 		\item The same way, reformulating and extending \cite{Ma2018-3}, $*$ can be also applied to $G_n$ and there are three smooth actions of $G_n$ on $M_n(\R_+^*)$ noted by $L, R$ and $Ad$ and, noting $G=(g_{k})_{k \in \N_n} \in G_n$ and $A=(a_{i,j})_{(i,j)\in \N_n^2} \in M_n(\R^*_+),$ they are defined respectively by
 		$$ L: (G,A) \mapsto (g_i a_{i,j})_{(i,j)\in \N_n^2}$$ if $i<j$ and $$ L: (G,A) \mapsto (g_i^{-1} a_{i,j})_{(i,j)\in \N_n^2}$$ if $i>j,$
 		$$ R: (G,A) \mapsto (g_j a_{i,j})_{(i,j)\in \N_n^2}$$if $i<j$ and $$ R: (G,A) \mapsto (g_j^{-1} a_{i,j})_{(i,j)\in \N_n^2}$$ if $i>j,$
 		and $$Ad (G,A) = L(G,R(G^{-1},A)).$$ The last formula highlights the link of $G$ with the gauge group of a discretized connection as described in \cite{Ma2018-3} in a study on $PC_n$ instead of $M_n(\R^*_+).$ One can remark that the actions $L$ and $R$ correspond to $*-$multiplicaton by matrices with constant lines (resp. constant columns).
 		\item Let $\mathfrak{G}_n$ be the group of $n-$permutations understood as the group of bijections of $\N_n.$ Then $\mathfrak{G}_n$ acts indexwise (and hence smoothly) on $M_n(\R^*_+)$ and the submanifolds $PC_n$ and $CPC_n$ are stable under this action. 
 	\end{enumerate} 
 	\subsection{Weights as projective coordinates}
 	We recall that family of weights $(w_i)_{i \in \N_n} \in (\R^*_+)^n$ is relative to a consistent PC matrix $A = (a_{i,j})_{(i,j)\in \N_n^2} \in  CPC_n(\R_+^*)$ if and only if $$\forall (i,j)\in \N_n^2, a_{i,j}= \frac{w_j}{w_i}.$$
 	Therefore, we have another interpretation of weights of a consistent pairwise comparisons matrix:
 	\begin{Proposition}
 		There exists a smooth embedding $\phi$ from $CPC_n(\R^*)$ to $\mathbb{P}_n(\R)$ such that, for each $A = (a_{i,j})_{(i,j)\in \N_n^2}\in CPC_n,$  $(w_i)_{i \in \N_n} \subset (\R_+^*)^n$ are projective coordinates for $\phi(A)$ if and only if $$a_{i,j} = w_i / w_j.$$
 	\end{Proposition}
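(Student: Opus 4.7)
The plan is to define $\phi$ directly from any weight family, check that it descends well to the projective quotient, and construct an explicit smooth inverse on its image in $\mathbb{P}_n(\R)$.

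First I would show existence of a weight family: for $A \in CPC_n$, consistency amounts to $a_{i,j} = a_{i,k}a_{k,j}$ for all $i,j,k$, so fixing any $k_0 \in \N_n$ and setting $w_i := a_{i,k_0}$ produces a family satisfying $a_{i,j} = w_i/w_j$ (using reciprocity $a_{j,k_0} = a_{k_0,j}^{-1}$). Next I would verify uniqueness up to a positive scalar: if $(w_i)$ and $(w'_i)$ both represent $A$, then $w_i/w_j = w'_i/w'_j$ yields $w'_i/w_i = w'_j/w_j$ for all $i,j$, so this common ratio is a constant $\lambda \in \R_+^*$ and $(w'_i) = \lambda(w_i)$. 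Consequently $\phi(A) := [w_1 : \cdots : w_n]$ is a well-defined point of $\mathbb{P}_n(\R)$, and the ``if and only if'' in the proposition is built into the construction.

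The map $\phi$ is smooth since the explicit formula $\phi(A) = [a_{1,k_0}:\cdots:a_{n,k_0}]$ is polynomial in the entries of $A$ and lands in the open positive chamber $U \subset \mathbb{P}_n(\R)$, where standard affine charts apply. Injectivity is then immediate: if $\phi(A) = \phi(B)$, the associated weight families agree up to a positive scalar, this scalar cancels in the ratios $w_i/w_j$, and one recovers $A = B$. To upgrade to an embedding I would exhibit the inverse $\psi : U \to CPC_n$ given by $[w_1 : \cdots : w_n] \mapsto (w_i/w_j)_{(i,j)\in \N_n^2}$, well-defined because the ratios are scale-invariant, landing in $CPC_n$ (reciprocity and consistency are immediate from the ratio form), and smooth in any standard affine chart of $U$. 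By construction $\psi \circ \phi = \mathrm{id}_{CPC_n}$ and $\phi \circ \psi = \mathrm{id}_{U}$, so $\phi$ is a diffeomorphism onto $U$ and in particular a smooth embedding.

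The main bookkeeping point is to identify $U$ cleanly as a smooth open submanifold of $\mathbb{P}_n(\R)$ and to check that $\psi$ transforms smoothly across the overlaps of the $n$ standard affine charts where one of the $w_i$ is normalized to $1$; once this is set up, the argument is essentially formal. No genuine obstacle appears beyond this, since both $CPC_n$ and $U$ have dimension $n-1$ and the construction of $\phi$ and $\psi$ is canonical.
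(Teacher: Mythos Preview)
Your proposal is correct and supplies exactly the kind of argument one would expect: the paper itself does not give a proof here, merely stating that ``the proof of the fact that $\phi$ is an embedding is obvious.'' Your explicit construction of weights $w_i = a_{i,k_0}$, the check that they are unique up to a positive scalar, and the smooth inverse $\psi([w_1:\cdots:w_n]) = (w_i/w_j)$ on the open positive chamber are the natural way to unpack that one-line claim, and nothing is missing.
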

 The proof of the fact that $\phi$ is an embedding is obvious.
 \begin{Proposition}
 	$\phi(CPC_n)$ is open and dense in $\mathbb{P}_n(\R)$ and the map $\phi$ is proper.
 \end{Proposition}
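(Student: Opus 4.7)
The plan is to leverage the explicit weight parametrization from the previous proposition. Writing $\phi(CPC_n) = \pi\bigl((\R_+^*)^n\bigr)$, where $\pi : \R^n\setminus\{0\} \to \mathbb{P}_n(\R)$ is the canonical projection, I would treat openness, density, and properness separately using this description.

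For openness, $(\R_+^*)^n$ is open in $\R^n \setminus \{0\}$ and $\pi$ is an open map (being the quotient by the free smooth $\R^*$-action), so $\phi(CPC_n)$ is open in $\mathbb{P}_n(\R)$. For density, I would observe that the complement of $\phi(CPC_n)$ is contained in a finite union of closed submanifolds of positive codimension (essentially the coordinate hyperplanes $\{w_i = 0\}$ together with the sign-discordance loci induced by the $\R^*$ identification), hence is nowhere dense, so $\phi(CPC_n)$ is dense.

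For properness, the key step is to exhibit an explicit continuous inverse of $\phi$ onto its image: the map $\psi: [w_1:\cdots:w_n] \mapsto (w_i/w_j)_{(i,j)}$ is well-defined, since the ratios are invariant under positive rescaling of $(w_i)$, and is continuous by elementary arguments on each affine chart of $\mathbb{P}_n(\R)$. Together with continuity of $\phi$, this makes $\phi$ a homeomorphism onto its open image, from which properness follows by pulling back compact sets through $\psi$.

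The main obstacle, as I see it, is the density claim: in the standard real projective space the positive orthant is a connected component of the complement of the coordinate hyperplanes, so classes with coordinates of mixed signs cannot be approximated from within that orthant. The argument therefore needs care in the interpretation of $\mathbb{P}_n(\R)$ in this context (for instance, as the compactification intrinsic to the weight space of the decision-theoretic setting), which is the delicate point I would expect to arbitrate at the outset of the proof.
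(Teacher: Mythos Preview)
Your outline follows the paper's proof closely. The paper also argues openness from $\phi$ being a smooth embedding between manifolds of the same dimension $n-1$, deduces properness from the fact that $\phi$ is a diffeomorphism onto $\phi(CPC_n)$, and argues density by asserting that $\mathbb{P}_n(\R)\setminus\phi(CPC_n)$ is exactly the union of the $n$ coordinate hyperplanes $\{w_i=0\}$, each of dimension $n-2$ and hence with empty interior.

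The obstacle you flag for density is genuine, and the paper's proof does not resolve it either: under the standard quotient by $\R^*$, the complement of the coordinate hyperplanes in $\mathbb{P}_n(\R)$ has $2^{n-1}$ connected components, and the image of the positive orthant is only one of them, so a class such as $[1:-1:1:\cdots:1]$ lies outside the closure of $\phi(CPC_n)$. The paper simply asserts that the complement coincides with the union of coordinate hyperplanes; this already presupposes the nonstandard interpretation of $\mathbb{P}_n(\R)$ that you are worried about, so your instinct that this is the delicate point is correct.

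On properness, your argument and the paper's share the same weakness. Being a homeomorphism (or diffeomorphism) onto an \emph{open} image does not yield properness as a map into the full target: $\mathbb{P}_n(\R)$ is compact while $CPC_n$ is not (for instance the coefficient $a_{1,2}=w_1/w_2$ is unbounded), so already $\phi^{-1}(\mathbb{P}_n(\R))=CPC_n$ fails to be compact. Thus neither your explicit inverse $\psi$ nor the paper's one-line remark ``diffeomorphism onto the image, which implies that $\phi$ is proper'' establishes properness in the usual sense; at best both give properness of $\phi$ as a map onto $\phi(CPC_n)$, which is tautological.
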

\begin{proof} $dim CPC_n = dim \mathbb{P}_n(\R) = n-1$ and $\phi$ is an embedding therefore $\phi(A)$ is an open submanifold of $\mathbb{P}_n(\R)$ and $\phi$ is a diffeomorphism from $CPC_n$ to $\phi(CPC_n),$ which implies that $\phi$ is proper. Moreover, if we consider the canonical embaddings of $\mathbb{P}_{n-1}(\R)$ into $\mathbb{P}_n(\R),$ defined by sending $(n-1)$ homogeneous coordinates into $n$ homogeneous coordinates after fixing one of them equal to $0,$ the set $$\mathbb{P}_n(\R) - \phi(CPC_n)$$ is composed of the images of these $n$ embeddings of $\mathbb{P}_{n-1}(\R)$ into $\mathbb{P}_n(\R),$ and, because each of them is of dimension $n-2 < n-1 = dim \mathbb{P}_n(\R),$ each of them has an empty interior. Therefore $\phi(CPC_n)$ is dense in $\mathbb{P}_n(\R).$ 
	\end{proof}
 	\subsection{(Reciprocal) pairwise comparisons}
 		We recall that a pairwise comparisons matrix is a matrix $A \in M_n(\R_+^*),$ such that $$\forall (i,j)\in \N_n^2, \quad a_{j,i} = a_{i,j}^{-1}.$$
 		This condition is called the reciprocity condition, and the word ``reciprocal'' is often omitted in the literature. However, since we consider here a wider class of pairwise comparisons, that can fulfill the reciprocity condition or not, we can use the more precise terminology ``reciprocal pairwise comparisons'' when we feel a possible ambiguity in the text that could lead to misleading statements.  Following \cite{Ma2018-3}, $PC_n$ is a smooth manifold of dimension $\frac{n(n-1)}{2}.$ As a free abelian Lie group, it is a Riemannian exponential Lie group, as well as $CPC_n.$
 		Moreover, 
 		\begin{Theorem} \cite{Ma2018-3}
 			$L(G_3,CPC_3)=PC_3$ and $\forall n \geq 4, L(G_n,CPC_n) \neq PC_n.$ 
 		\end{Theorem}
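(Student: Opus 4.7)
The plan is to split the theorem into an explicit inversion for $n=3$ and an algebraic obstruction for $n\ge 4$, preceded by one easy general inclusion. The inclusion $L(G_n,CPC_n)\subseteq PC_n$ is automatic: multiplying the $(i,j)$-entry by $g_i$ for $i<j$ forces, via reciprocity, the $(j,i)$-entry to be divided by the same $g_i$, so $L$ preserves reciprocity and sends $CPC_n\subset PC_n$ into $PC_n$. Thus in each case it remains to analyse the reverse inclusion.

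For $n=3$, given $A\in PC_3$ with upper triangle $(a_{1,2},a_{1,3},a_{2,3})$, I look for weights $(w_1,w_2,w_3)$ of some $B\in CPC_3$ and $G=(g_1,g_2,g_3)\in G_3$ solving the three upper-triangular equations $g_1\, w_2/w_1=a_{1,2}$, $g_1\, w_3/w_1=a_{1,3}$, $g_2\, w_3/w_2=a_{2,3}$. Normalising $w_1=w_2=1$ yields $g_1=a_{1,2}$, $w_3=a_{1,3}/a_{1,2}$, $g_2=a_{1,2}a_{2,3}/a_{1,3}$ (with $g_3$ arbitrary), and $L(G,B)=A$ by construction: since the system has three equations in (effectively) more unknowns, one can always solve, and the obtained $B$ is consistent because its entries are built from a single weight vector.

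For $n\ge 4$, I would extract a universal identity from $a_{i,j}=g_i\, w_j/w_i$, which holds whenever $A=L(G,B)$ with $B$ of weights $(w_i)$. For any four indices $1\le i_1<i_2<j_1<j_2\le n$ the cross-ratio-type identity $a_{i_1,j_1}\, a_{i_2,j_2}=a_{i_1,j_2}\, a_{i_2,j_1}$ must hold, since in logarithmic coordinates the $g_{i_1},g_{i_2},w_{i_1},w_{i_2}$ contributions coincide on both sides and the $w_{j_1},w_{j_2}$ factors each appear once on each side. This identity is vacuous for $n\le 3$ but a genuine constraint for $n\ge 4$; to conclude I exhibit a reciprocal PC matrix violating it, for instance the $n\times n$ matrix whose upper-triangular entries are all $1$ except $a_{2,4}=2$, which gives $a_{1,3}\, a_{2,4}=2\neq 1=a_{1,4}\, a_{2,3}$ and thus lies in $PC_n\setminus L(G_n,CPC_n)$. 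The conceptually delicate step is spotting this cross-ratio invariant; once it is in hand the whole argument reduces to elementary log-linear book-keeping, and a dimension check---the log-linear parametrisation $(\log g,\log w)\mapsto(\log a_{i,j})_{i<j}$ has image of dimension $2n-3$, matching $\binom{n}{2}=\dim PC_n$ only at $n=3$---offers a sanity check confirming that $n=3$ is the unique value for which equality can hold.
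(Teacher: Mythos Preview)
The paper does not supply its own proof of this statement: the theorem is simply quoted from \cite{Ma2018-3}. There is therefore nothing in the present paper to compare your argument against.

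On its own merits your proof is correct. The inclusion $L(G_n,CPC_n)\subset PC_n$ and the explicit inversion for $n=3$ are fine as written. For $n\ge 4$, your cross-ratio observation is the right invariant: writing $a_{i,j}=g_i\,w_j/w_i$ for $i<j$, both sides of $a_{i_1,j_1}a_{i_2,j_2}=a_{i_1,j_2}a_{i_2,j_1}$ (with $i_1<i_2<j_1<j_2$) equal $g_{i_1}g_{i_2}w_{j_1}w_{j_2}/(w_{i_1}w_{i_2})$, and your counterexample with $a_{2,4}=2$ and all other upper entries equal to $1$ violates it. The dimension count $2n-3=\binom{n}{2}\iff n\in\{2,3\}$ is also correct and is a nice confirmation.

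One small remark worth recording: the paper's displayed formula for $L$ on lower-triangular entries, namely $(i,j)\mapsto g_i^{-1}a_{i,j}$ for $i>j$, taken literally does \emph{not} preserve reciprocity (one gets $g_i/g_j$ instead of $1$ for the product of symmetric entries). You have silently used the reciprocity-preserving reading, in which the $(j,i)$-entry for $j>i$ is $(g_i a_{i,j})^{-1}=g_i^{-1}a_{j,i}$; this is certainly the intended meaning, since otherwise the theorem could not even be stated, but it would do no harm to flag the discrepancy explicitly.
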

 		For application purposes, one not only needs to decide if a pairwise comparisons matrix is consistent, but also if an inconsistent pairwise comparisons matrix is ``very'' inconsistent or ``not so much'' inconsistent. This leads to the definition of the following objects.
 		\begin{Definition} \label{d:ii}
 			An \textbf{inconsistency index} is a map $ii:PC_n \rightarrow \R_+$ such that $$ii^{-1}(0) = CPC_n.$$
 			An \textbf{inconsistency indicator} is an inconsistency index with values in $[0,1].$
 		\end{Definition}   
 	\begin{rem}
 		This terminology is common in the literature, and it is sometimes applied to maps for which $ii^{-1}(0)$ also contains non consistent pairwise comparisons matrices. This ambiguity is due, to our feeling and to our knowledge, mostly to the development of the theory with very popular maps, used historically to detect submatrices with the ``highest'' possible inconsistency. There are propositions on the conditions that must fulfill an inconsistency index, which are not all equivalent, but most of them require that $ii^{-1}(0) = CPC_n.$ This work is not the place for controversies on this subject, and we decide to work in the context here announced.
 		
 		The same way, the actual state of knowledge do not produce a shared requirement for the regularity of the map $ii.$ Does it need to be continuous, derivable or smooth? There are divergent opinions due to the various fields of interest of the authors and, again, to the historical development of the theory. For example, one of the most famous scales for inconsistency envolves natural numbers from $0$ to $9$ (Saaty's scale). The regularity that we assume for an inconsistency index will be precised in the statements.
 	\end{rem}
 The family $$\left\{ii^{-1}([0,\epsilon)) \, | \, \epsilon \in \R_+^* \right\}$$ forms a base of filter around $CPC_n$ \cite{Ma2018-3}. In \cite{MMC2023}, inconsistency indexes of sufficient regularity are treated as scalar potentials on $PC_n$ that have to be minimized by a gradient method. 
 
 	\subsection{Non-reciprocal pairwise comparisons and related indicators}
 	
 		We recall that a non-reciprocal pairwise comparisons matrix is a matrix $A \in M_n(\R_+^*),$ without any further assumption. In particular, if $A$ is not reciprocal, $$\exists (i,j)\in \N_n^2, \quad a_{i,j} \neq a_{j,i}^{-1},$$ and more generally in our terminology, a non-reciprocal matrix may be reciprocal or not reciprocal. We can then adapt the following definition from the notion of inconsistency index:
 		
 			\begin{Definition}
 			A \textbf{reciprocity index} is a map $ri:M_n(\R_+^*) \rightarrow \R_+$ such that $$ri^{-1}(0) = PC_n.$$
 			A \textbf{reciprocity indicator} is an reciprocity index with values in $[0,1].$
 		\end{Definition}   
 	\begin{rem}
 		In order to be coherent with the terminology of Definition \ref{d:ii}, we should prefer the terminology ''non-reciprocity index'' in the last definition. However, since this notion is new, we feel that it is subject to adaptations in future developments and hence the terminology will be fixed naturally by practice.
 	\end{rem}
   In the same spirit, we have
  
  	\begin{Definition}
  	An \textbf{inconsistency and reciprocity index} (or \textbf{total index}) is a map $iri:M_n(\R_+^*) \rightarrow \R_+$ such that $$iri^{-1}(0) = CPC_n.$$
  	An \textbf{inconsistency and reciprocity indicator} (or \textbf{total indicator}) is an inconsistency and reciprocity index with values in $[0,1].$
  \end{Definition}
 Let us give now few classes of examples: 
 \begin{example}
 	Let $d$ be the canonical Riemannian metric on the Lie group $(M_n(\R_+^*),*)$ Then, for any $\gamma \in \R_+^*,$
 	$$ A \in M_n(\R_+^*) \mapsto d(A,PC_n)^\gamma$$
 	is a reciprocity index and 
 	$$ A \in M_n(\R_+^*) \mapsto d(A,CPC_n)^\gamma$$
 	is an inconsistency and reciprocity index. 
 	Moreover, if we replace $d$ by another distance, we get another inconsistency index, and if we moreover assume that the diameter of $M_n(\R_+^*)$ is 1, we get an inconsistency indicator. This class of indicators is different, but may be treated in the same spirit, as those taken as case studies in \cite{MMC2023}. 
 \end{example}
 
 \subsection{On the geometry of the tower $CPC_n \subset PC_n \subset M_n(\R_+^*).$}
 Indeed, $M_n(\R_+^*)$ completes the tower of Riemannian abelian Lie groups
 $$M_n(\R_+^*) \supset PC_n \supset CPC_n$$
 and we now feel the need to investigate intrinsic geometric properties of these inclusions. 
 Let us in particular study the inclusion $PC_n \subset M_n(\R_+^*).$ 
 
 \begin{Proposition}
 	$M_n(\R)$ is the (abelian) Lie algebra of $M_n(\R_+^*)$ and the Lie algebra $\mathfrak{pc}_n \subset M_n(\R)$ of $PC_n$ is the vector space of skew-symmetric matrices with coefficients in $\R.$
 \end{Proposition}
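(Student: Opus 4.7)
The plan is to exploit the isomorphism $M_n(\R_+^*) \cong (\R_+^*)^{n^2}$ given by the coefficientwise identification used to define the Lie group structure $*$ in item (2) of the preliminary settings. This reduces the whole computation to $n^2$ commuting copies of the one-dimensional multiplicative group.

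First I would treat the one-dimensional model. The group $(\R_+^*,\cdot)$ is an abelian Lie group whose Lie algebra is $(\R,+)$ with trivial bracket, and whose Lie exponential is the usual $\exp:\R\to\R_+^*$, a global diffeomorphism with inverse $\ln$. Taking the direct product of $n^2$ such factors, I conclude that the Lie algebra of $M_n(\R_+^*)$ (for the $*$-structure) is the direct sum $\R^{n^2}$, which under the coefficientwise identification is exactly $M_n(\R)$, endowed with the identically zero bracket. The Lie exponential map is therefore the entrywise exponential $E:M_n(\R)\to M_n(\R_+^*)$, $E(X)_{i,j}=e^{X_{i,j}}$, which is a global diffeomorphism; this confirms the ``Riemannian exponential Lie group'' status already mentioned in the excerpt.

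Next I would identify $\mathfrak{pc}_n$ as the Lie subalgebra of the closed subgroup $PC_n\subset M_n(\R_+^*)$, using the standard characterization: $X\in\mathfrak{pc}_n$ iff the one-parameter subgroup $t\mapsto E(tX)$ lies in $PC_n$ for every $t\in\R$. Because $E$ acts entrywise, this is equivalent to requiring that, for all $(i,j)\in\N_n^2$ and all $t\in\R$, $e^{tX_{j,i}}=\bigl(e^{tX_{i,j}}\bigr)^{-1}=e^{-tX_{i,j}}$, which holds iff $X_{j,i}=-X_{i,j}$. Taking $i=j$ forces $X_{i,i}=0$, consistently with $a_{i,i}=1$ on $PC_n$. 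Conversely every skew-symmetric $X$ obviously generates such a one-parameter subgroup, so $\mathfrak{pc}_n$ is exactly the space of skew-symmetric matrices. A dimension check yields $\dim\mathfrak{pc}_n=n(n-1)/2$, matching the already recorded dimension of $PC_n$.

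The only subtle point, and essentially the sole place where care is needed, is to keep in mind that the Lie group structure at stake is the componentwise $*$-multiplication defined in item (2), \emph{not} the ordinary matrix multiplication; once this is clarified the proof reduces to the elementary observations above, and no serious obstacle remains.
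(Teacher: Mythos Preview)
Your proof is correct and follows essentially the same approach as the paper: both identify the Lie algebra of $M_n(\R_+^*)$ via the coefficientwise product structure and then linearize the reciprocity constraint $a_{i,j}=a_{j,i}^{-1}$ at the identity. The only cosmetic difference is that you phrase the second step via one-parameter subgroups $t\mapsto E(tX)$, whereas the paper simply differentiates the constraint at $a_{i,j}=1$; these are equivalent computations.
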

 
  \begin{proof}
  Since $M_n(\R_+^*)$ and $PC_n$ are abelian Lie groups, the Lie bracket on their Lie algeba is vanishing. Moreover, since multiplication is coefficientwise, its is trivial that $M_n(\R)$ is the Lie algebra of $M_n(\R_+^*).$ 
  
  Finally, since  $(a_{i,j})_{(i,j)\in \N_n^2} \in M_n(\R_+^*)$ is reciprocal if and only if $$\forall (i,j)\in \N_n^2, \quad a_{i,j}=a_{j,i}^{-1},$$ we get, by differentiation of this constraint at $a_{i,j} =  1$, that $(a_{i,j})_{(i,j)\in \N_n^2} \in M_n(\R)$ is moreover in $\mathfrak{pc}_n$ if and only if $$\forall (i,j)\in \N_n^2, \quad b_{i,j}=-b_{j,i}.$$
  	\end{proof}
  	
 Let us now split $M_n(\R_+^*)$ in order to recover, in some way that we will soon precise, a reciprocal and a non-reciprocal part. Before that, we feel the need to recall the geometric mean method which projects $PC_n$ to $CPC_n$ through the projection $\pi_{GMM}$ defined by
 $$ \pi_{GMM}\left((a_{i,j})_{(i,j)\in \N_n^2}\right) = (b_{i,j})_{(i,j)\in \N_n^2}$$
 with 
 $$\forall (i,j)\in \N_n^2, \quad b_{i,j} = \sqrt[n]{\frac{\prod_{k=1}^n a_{i,k}}{\prod_{k=1}^n a_{l,j}}}.$$	
 \begin{rem} This method is also described in the case of \emph{additive} pairwise comparisons matrices \cite{KSS2020} but it is a simple fact that additive and multiplicative pairwise comparisons are in one to one correspondence, see e.g. the brief but very clear exposition in \cite{SKE2023}. From a geometric viewpoint, additive pairwise comparisons form the Lie algebra of the group of multiplicative pairwise comparisons, which is trivially an exponential group. This easy fact will be of central interest very soon in our work.
 	\end{rem}
 There is a straightforward projection $\pi_{PC}:M_n(\R_+^*)\rightarrow PC_n$ defined by
 $$\pi_{PC}\left((a_{i,j})_{(i,j)\in \N_n^2}\right) = (b_{i,j})_{(i,j)\in \N_n^2}$$
 with 
 $$\forall (i,j)\in \N_n^2, \quad b_{i,j} = \sqrt{a_{i,j}/a_{j,i}}.$$
 
 \begin{rem}
 	If we analyze the diagonal coefficients of the image we get trivially that $\forall i\in \N_n, \, b_{i,i} = 1$
 \end{rem}
 Therefore, there is another natural projection that we call $\pi_{NR}$ defined by 
  
   $$\pi_{PC}\left((a_{i,j})_{(i,j)\in \N_n^2}\right) = (b_{i,j})_{(i,j)\in \N_n^2}$$
  with 
  $$\forall (i,j)\in \N_n^2, \quad b_{i,j} = \sqrt{a_{i,j}a_{j,i}}.$$
  Next proposition is straigthforward:
  \begin{Proposition} \label{prop:dec}
  	$\forall A \in M_n(\R_+^*), A = \pi_{PC}(A)* \pi_{NR}(A),$ and  
  	\begin{itemize}
  		\item if $A = (a_{i,j})_{(i,j)\in \N_n^2},$ $ \pi_{PC}(A)=\pi_{NR}(A) \Leftrightarrow \forall (i,j)\in \N_n^2, a_{i,j} = 1$
  		\item $\pi_{PC}(M_n(\R_+^*)) \cap \pi_{NR}(M_n(\R_+^*))$ is uniquely composed by the matrix $A = (a_{i,j})_{(i,j)\in \N_n^2},$ such that $\forall (i,j)\in \N_n^2, \quad a_{i,j} = 1.$
  	\end{itemize}
  	In other words, the Lie group $M_n(\R_+^*)$ decomposes into a direct product of $\pi_{PC}(A)$ and of $\pi_{NR}(A).$ Moreover, projections are Lie group morphisms.
  \end{Proposition}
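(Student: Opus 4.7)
The plan is to verify each clause by direct coefficientwise computation in $\R_+^*$, then assemble them into the direct product decomposition.

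First I would check the factorization $A = \pi_{PC}(A) * \pi_{NR}(A)$ entry by entry:
\[
\pi_{PC}(A)_{i,j}\cdot \pi_{NR}(A)_{i,j} = \sqrt{a_{i,j}/a_{j,i}}\cdot \sqrt{a_{i,j}a_{j,i}} = \sqrt{a_{i,j}^2} = a_{i,j},
\]
the positivity of $a_{i,j}$ being used to take the unambiguous positive square root. Next I would identify the two images. For $B=\pi_{PC}(A)$, one reads off $b_{j,i}=b_{i,j}^{-1}$ and $b_{i,i}=1$, so the image lies in $PC_n$; and on $PC_n$ the projection is the identity, since $\sqrt{a_{i,j}/a_{i,j}^{-1}}=a_{i,j}$. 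Hence $\pi_{PC}(M_n(\R_+^*))=PC_n$. Symmetrically, for $B=\pi_{NR}(A)$ one finds $b_{j,i}=b_{i,j}$, so the image is contained in the Lie subgroup of matrices satisfying $b_{i,j}=b_{j,i}$, on which $\pi_{NR}$ is easily seen to be the identity.

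From these identifications, the intersection $\pi_{PC}(M_n(\R_+^*))\cap \pi_{NR}(M_n(\R_+^*))$ consists of matrices $B$ satisfying simultaneously $b_{j,i}=b_{i,j}^{-1}$ and $b_{j,i}=b_{i,j}$, forcing $b_{i,j}^2=1$, hence $b_{i,j}=1$ in $\R_+^*$. The second bullet of the proposition is the same calculation applied to $A$ itself: $\pi_{PC}(A)_{i,j}=\pi_{NR}(A)_{i,j}$ reads $a_{i,j}/a_{j,i}=a_{i,j}a_{j,i}$, i.e.\ $a_{j,i}^2=1$, equivalent to all entries of $A$ being $1$.

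Finally, the morphism property of $\pi_{PC}$ and $\pi_{NR}$ with respect to $*$ follows because each projection is built coefficientwise from products, quotients and square roots in $(\R_+^*,\cdot)$, each of which is a Lie group morphism; hence each commutes with $*$. Together with the factorization and the trivial intersection, this realizes $M_n(\R_+^*)$ as the internal direct product of its two image subgroups via $A\mapsto (\pi_{PC}(A),\pi_{NR}(A))$. I do not foresee a genuine obstacle: the proof is a chain of coefficientwise identities, and the only point requiring attention is the consistent use of positivity, both to extract square roots unambiguously and to pass from $b_{i,j}^2=1$ to $b_{i,j}=1$.
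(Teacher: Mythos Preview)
Your argument is correct and essentially the same as the paper's: both reduce to coefficientwise verification that the two images are the skew-reciprocal and symmetric subgroups with trivial intersection. The only organizational difference is that the paper establishes surjectivity of the multiplication map $*:\pi_{PC}(M_n(\R_+^*))\times\pi_{NR}(M_n(\R_+^*))\to M_n(\R_+^*)$ via a dimension count (using the preceding lemma that $\dim\pi_{NR}(M_n(\R_+^*))=n(n+1)/2$, so the two dimensions add to $n^2$) together with trivial kernel and connectedness, whereas you bypass dimensions entirely by exhibiting the explicit factorization $A=\pi_{PC}(A)*\pi_{NR}(A)$; your route is slightly more self-contained.
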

  \begin{Lemma} \label{dim NR}
  	$dim\left(\pi_{NR}(M_n(\R_+^*))\right) = \frac{n(n+1)}{2}.$
  \end{Lemma}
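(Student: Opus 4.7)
The plan is to identify $\pi_{NR}(M_n(\R_+^*))$ explicitly as the submanifold of symmetric matrices with strictly positive entries, and then read off the dimension by counting independent coefficients. The key observation is that the formula $b_{i,j}=\sqrt{a_{i,j}a_{j,i}}$ is manifestly invariant under the swap $(i,j)\leftrightarrow (j,i)$, so the image of $\pi_{NR}$ automatically lies in
$$S_n^+ := \left\{(b_{i,j})_{(i,j)\in \N_n^2}\in M_n(\R_+^*) \,\big|\, \forall (i,j)\in \N_n^2,\ b_{i,j}=b_{j,i}\right\}.$$

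First, I would verify the reverse inclusion by a direct calculation: if $B=(b_{i,j})\in S_n^+$, then $b_{i,j}b_{j,i}=b_{i,j}^2$ (all entries being positive), so $\sqrt{b_{i,j}b_{j,i}}=b_{i,j}$ and hence $\pi_{NR}(B)=B$. This shows $\pi_{NR}$ restricts to the identity on $S_n^+$ and, together with the first observation, gives the set-theoretic equality $\pi_{NR}(M_n(\R_+^*))=S_n^+$.

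Second, I would exhibit $S_n^+$ as a smooth submanifold of $M_n(\R_+^*)=(\R_+^*)^{n^2}$ by parameterizing it via its upper-triangular coefficients (including the diagonal): the map
$$\Psi:(\R_+^*)^{n(n+1)/2} \To S_n^+,\qquad (b_{i,j})_{i\leq j}\mapsto (b_{i,j})_{(i,j)\in \N_n^2} \text{ with } b_{j,i}:=b_{i,j} \text{ for } i<j,$$
is evidently a smooth bijection whose inverse is the restriction of the smooth projection onto the upper-triangular coefficients; hence $\Psi$ is a diffeomorphism and $\dim S_n^+=\frac{n(n+1)}{2}$.

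I expect no real obstacle here: the argument is essentially a coefficient count, and the symmetry of the defining formula makes the identification of the image immediate. The only point worth a sanity check is the internal consistency with the tower from Proposition \ref{prop:dec}: since $\dim M_n(\R_+^*)=n^2$ and $\dim PC_n=\frac{n(n-1)}{2}$ (the Lie algebra $\mathfrak{pc}_n$ of skew-symmetric matrices), the direct product decomposition forces $\dim \pi_{NR}(M_n(\R_+^*))=n^2-\frac{n(n-1)}{2}=\frac{n(n+1)}{2}$, which matches. This cross-check could also be promoted to an alternative proof if one prefers to rely on Proposition \ref{prop:dec} rather than on the explicit identification with $S_n^+$.
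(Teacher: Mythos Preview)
Your proof is correct and follows essentially the same route as the paper: identify $\pi_{NR}(M_n(\R_+^*))$ with the symmetric matrices with positive entries and count the independent coefficients. Your version is more carefully written (you justify the reverse inclusion and exhibit an explicit diffeomorphism), whereas the paper's proof is a one-line coefficient count with a typographical slip in the arithmetic (it writes $\frac{n(n+1)}{2}+n$ where $\frac{n(n-1)}{2}+n$ is meant).
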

  \begin{proof} A matrix in  $\pi_{NR}(M_n(\R_+^*))$ is symmetric in its off-diagonal coefficients, but it has its $n$ diagonal coefficients that may be different from $1.$ Therefore, the dimension of this Lie group is $\frac{n(n+1)}{2} + n = \frac{n(n+1)}{2}.$
  	\end{proof}
  	\begin{proof}[Proof of Proposition \ref{prop:dec}] Through direct calculations, the multiplication map $$ * :\pi_{PC}(M_n(\R_+^*)) \times \pi_{NR}(M_n(\R_+^*)) \rightarrow  M_n(\R_+^*)$$ is a morphism of connected Lie groups of the same dimension with trivial kernel, with $Im\pi_{NR} = Ker \pi_{PC}$ and $Im \pi_{PC} = Ker \pi_{NR}.$
  	\end{proof}
 Let us now investigate a relationship with discretized connections that extends the existing identifications in $PC_n.$ For this, we first remark that  Lemma \ref{dim NR} shows that the decomposition is not exactly a doubling of the structure group from $PC_n$ to $M_n(\R_+^*)$ because $dim\left(\pi_{NR}(M_n(\R_+^*))\right) > dim\left(PC_n\right).$ Roughly speaking, the dimensions that we have ``in more'' in $\pi_{NR}(M_n(\R_+^*)$ are the diagonal coefficients of the matrices. Therefore, we propose a new structure group $$ G = T\R_+^* = \R_+^* \times \R.$$ 
and a family of smooth maps $\phi_\alpha: (\R_+^*)^2 \rightarrow \R_+^* \times \R$ indexed by $\alpha \in \{1,0,-1\}$  defined by 
$$\phi_\alpha(a,b) = \left\{\begin{array}{ccl} (a, \alpha \log b) & \hbox{ if } & \alpha \neq 0 \\
(1,\log b) &\hbox{ if } & \alpha = 0\end{array}\right.$$
Let $\epsilon(i,j)$ be the sign of $i-j$ for $(i,j) \in \N_n^2.$
\begin{Definition}
	For any $A \in M_n(\R_+),$ such that $\pi_{PC}(A)= (c_{i,j})_{(i,j)\in \N_n^2} $ and $\pi_{NR}(A) = (d_{i,j})_{(i,j)\in \N_n^2} ,$ we define the matrix $\Phi(A) = (b_{i,j})_{(i,j)\in \N_n^2}$ with coefficients in $G$ by 
	$$\forall (i,j)\in \N_n^2, b_{i,j} = \phi_{\epsilon(i,j)}(c_{i,j},d_{i,j}).$$
\end{Definition}
For the sake of the clarity of next theorem, we note by $PC_n(\R_+^*)$ the group of multiplicative pairwise comparisons matrices with coefficients in $\R_+^*$ and by $PC_n(\R)$ the group of additive paiwise comparisons matrices with coefficients in $\R.$ 
\begin{Theorem}
	$\Phi(M_n(\R_+))$ is identified as a Lie group with $PC_n(\R_+^*) \times PC_n(\R) \times \R^n.$ With the notations of the last definition, the identification is given by 
	$$b_{i,j} \mapsto (c_{i,j}, \epsilon(i,j) d_{i,j} )\in PC_n(\R_+^*) \times PC_n(\R) $$
	and $$A \mapsto (d_{i,i})_{i \in \N_n} \in \R^n.$$
\end{Theorem}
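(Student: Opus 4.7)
The plan is to build on Proposition \ref{prop:dec}, which already identifies $M_n(\R_+^*)$ with the direct product $\pi_{PC}(M_n(\R_+^*)) \times \pi_{NR}(M_n(\R_+^*))$ of Lie groups, and then to replace the second factor by a more tractable object by applying the maps $\phi_\alpha$ coefficientwise. The first factor equals $PC_n(\R_+^*)$ by definition, so essentially all the work lies in understanding the image of $\pi_{NR}(M_n(\R_+^*))$ under $\Phi$.

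First I would separate $\pi_{NR}(M_n(\R_+^*))$ into its off-diagonal symmetric part (of dimension $\frac{n(n-1)}{2}$) and its $n$ free diagonal coefficients, exactly as in the dimension count of Lemma \ref{dim NR}. For $i \neq j$, $\phi_{\epsilon(i,j)}$ preserves the first coordinate $c_{i,j}$ and sends the off-diagonal $d_{i,j}$ to $\epsilon(i,j)\log d_{i,j}\in\R$. Since $d_{i,j} = d_{j,i}$ and $\epsilon(i,j) = -\epsilon(j,i)$, the resulting real matrix of second coordinates is skew-symmetric, hence an element of $PC_n(\R)$. On the diagonal, $\phi_0$ discards the forced value $c_{i,i}=1$ and retains $\log d_{i,i}$, so the $n$ diagonal second-coordinates assemble into a vector in $\R^n$.

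This gives an explicit identification map $\Psi : \Phi(M_n(\R_+^*)) \To PC_n(\R_+^*) \times PC_n(\R) \times \R^n$, which matches the formulas in the statement. To prove it is a bijection I would just exhibit the inverse: given $(B_1,B_2,v)$, set $\pi_{PC}(A) = B_1$, recover the off-diagonal part of $\pi_{NR}(A)$ by $d_{i,j} = \exp(\epsilon(i,j)(B_2)_{i,j})$ (which is well-defined and symmetric by the skew-symmetry of $B_2$), and the diagonal part by $d_{i,i} = \exp(v_i)$; then $A = \pi_{PC}(A) * \pi_{NR}(A)$ by Proposition \ref{prop:dec}, and $\Phi(A)$ maps back to $(B_1,B_2,v)$ by construction.

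For the Lie group structure, coefficientwise $*$-multiplication on $M_n(\R_+^*)$ projects to $*$-multiplication on each factor by Proposition \ref{prop:dec}, and each $\phi_\alpha$ is a smooth morphism from $(\R_+^*)^2$ (coefficientwise $*$) to $\R_+^* \times \R$ (multiplicative on the first coordinate, additive on the second); the smoothness of $\Psi^{-1}$ follows from that of $\exp$. The main step requiring care is the sign bookkeeping in $\phi_{\epsilon(i,j)}$, which is exactly what converts the \emph{symmetric} off-diagonal part of $\pi_{NR}$ into the \emph{skew-symmetric} (additive-reciprocal) factor $PC_n(\R)$; handling the diagonal case then forces the separate storage of $(d_{i,i})_{i\in\N_n}$, producing the third factor $\R^n$ and matching the dimension count $\frac{n(n-1)}{2} + \frac{n(n-1)}{2} + n = n^2$.
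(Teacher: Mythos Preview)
Your argument is correct and is precisely the unpacking of definitions that the paper leaves implicit (the paper's own proof is the single sentence ``The proof is straightforward''). In fact you are slightly more careful than the paper: you correctly carry the $\log$ through via $\phi_{\epsilon(i,j)}$, whereas the identification formulas in the statement suppress it, and your sign bookkeeping showing that the symmetric $(d_{i,j})$ becomes skew-symmetric after multiplication by $\epsilon(i,j)$ is exactly the point that makes the $PC_n(\R)$ factor appear.
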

The proof is straightforward.
\begin{rem} In \cite{Ma2018-3}, for any Lie group $G,$ we described pairwise comparisons matrices $PC_n(G).$ We have here $$PC_n(\R_+^*) \times PC_n(\R) = PC_n(G)$$ for the struture group already announced $$ G = T\R_+^* = \R_+^* \times \R.$$
	\end{rem}
	As a consequence of this remark, applying one main construction of \cite{Ma2018-3}, we get:
 \begin{Theorem} \label{th:hol}
 	$M_n(\R_+^*)$ projects to connections on $\Delta_n \times (\R_+^* \times \R)$ discretized by their holonomies, and the kernel of this projection is the space of pairwise comparisons matrices  $(a_{i,j})_{(i,j)\in \N_n^2} \in M_n(\R_+^*)$ such that 
 	\begin{equation*}
 	\forall (i,j)\in \N^2,	 i \neq j \Rightarrow a_{i,j}= 1  .
 	\end{equation*}
 \end{Theorem}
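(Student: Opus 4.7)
The plan is to deduce the theorem directly from the identification $\Phi(M_n(\R_+^*)) \cong PC_n(\R_+^*) \times PC_n(\R) \times \R^n$ of the preceding theorem, from the remark that $PC_n(\R_+^*) \times PC_n(\R) = PC_n(G)$ with $G = T\R_+^* = \R_+^* \times \R$, and from the correspondence of \cite{Ma2018-3} identifying $PC_n(G)$ with the space of connections on $\Delta_n \times G$ discretized by their holonomies along the oriented $1$-edges of the simplex. The projection is then simply the composite of Lie group morphisms
$$ M_n(\R_+^*) \xrightarrow{\;\Phi\;} PC_n(\R_+^*) \times PC_n(\R) \times \R^n \xrightarrow{\;\mathrm{pr}\;} PC_n(G), $$
where $\mathrm{pr}$ discards the $\R^n$ factor carrying the diagonal coefficients $(d_{i,i})_{i \in \N_n}$. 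No new construction is required; it remains only to identify the kernel.

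For the kernel, I would invoke Proposition \ref{prop:dec} to write $A = \pi_{PC}(A) * \pi_{NR}(A)$ and use the explicit formulas for $\pi_{PC}$ and $\pi_{NR}$. An element $A=(a_{i,j})$ is annihilated iff $\pi_{PC}(A)$ is the matrix of ones and iff, for every $i \neq j$, the value $\phi_{\epsilon(i,j)}\bigl(1,\sqrt{a_{i,j}a_{j,i}}\bigr)$ is the neutral element of $G$, which amounts to $\sqrt{a_{i,j}a_{j,i}} = 1$. The two conditions $a_{i,j}/a_{j,i}=1$ and $a_{i,j}a_{j,i}=1$ then multiply to $a_{i,j}^2=1$, forcing $a_{i,j}=1$ in $\R_+^*$ for every off-diagonal pair. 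The diagonal entries $a_{i,i}$ enter only through $\phi_0$ into the discarded $\R^n$ factor, so they remain free, recovering exactly the stated kernel.

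The point to be most careful about is the sign convention $\epsilon(i,j)$ in the definition of $\Phi$: one has to verify that the resulting $G$-valued matrix really lies in $PC_n(G)$, i.e.\ satisfies componentwise reciprocity in $G = \R_+^*\times \R$ (multiplicative on the first factor, additive on the second). This is however already encoded in the preceding theorem, the role of $\epsilon(i,j)$ being precisely to restore additive reciprocity from the symmetric off-diagonal data $d_{i,j}$. Modulo this routine check of conventions, the argument reduces entirely to the holonomy dictionary of \cite{Ma2018-3}, invoked here as a black box, so no real obstacle is expected.
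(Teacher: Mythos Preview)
Your proposal is correct and follows exactly the route the paper indicates: the paper gives no detailed proof of Theorem~\ref{th:hol} but introduces it with ``As a consequence of this remark, applying one main construction of \cite{Ma2018-3}, we get,'' which is precisely the composite $M_n(\R_+^*)\xrightarrow{\Phi} PC_n(G)\times\R^n\xrightarrow{\mathrm{pr}}PC_n(G)$ that you spell out. Your kernel computation via $c_{i,j}=1$ and $d_{i,j}=1$ for $i\neq j$ is the natural (and only reasonable) way to fill in the details the paper omits.
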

 
 We call \emph{pure pairwise comparisons matrix} a (maybe non reciprocal) matrix $(a_{i,j})_{(i,j)\in \N_n^2} \in M_n(\R_+^*)$ such that $\forall i \in \N_n, a_{i,i} = 1.$
 \begin{rem}
 	We may have made the choice to consider in our study only pure pairwise comparisons matrices. This would have given a more elegant formulation of last theorem, fully identifying discretized connections with the space of pairwise comparisons that we would have considered. However, for the sake of completeness of the framework that we propose here in view of modelizations, we feel the need to consider the assumption $a_{i,i}\neq 1$ as non void for applications. An example is briefly developped in the appendix, before follow-up investigations.
 \end{rem}
 \subsection{Inconsistency on $M_n(\R_+^*)$}
 Applying Theorem \ref{th:hol}, one can define consistent (maybe) non-reciprocal pairwise comparisons matrices by the discretized flat connections, the following way:
 \begin{Definition}
 	Let $A \in M_n(\R_+^*).$ Then the (maybe non reciprocal) pairwise comparisons is consistent if it is a pure pairwise comparisons matrix that projects to a flat discretized connection on $\Delta_n \times (\R_+^* \times \R).$ This space is noted by $CM_{n}(\R_+^*).$
 \end{Definition}
 
 This definition clearly extends the notion of consistent (and reciprocal) pairwise comarisons matrix. In the sequel, we shall make no difference in the notations between $CM_n(\R_+^*)$ as a set of pairwise comparions matrices with coefficients in $G$ and as a set of non-reciprocal pairwise comparisons in $M_n(\R_+^*)$ because of the bijection mentioned before between the two subsets, and we can define
 	\begin{Definition}
 	A \textbf{inconsistency index} is a map $ii:M_n(\R_+^*) \rightarrow \R_+$ such that $$ii^{-1}(0) = CM_n(\R_+^*).$$
 	A \textbf{inconsistency indicator} is an reciprocity index with values in $[0,1].$
 \end{Definition} 
  
  This definition clearly defines inconsistency indexes (resp. indicators) $ii$ on $M_n(\R_+^*)$ that restrict to inconsistency indexes (resp. indicators) on $PC_n$ along the lines of Definition \ref{d:ii}.
  \begin{example}
  	The discretized Yang-Mills functional along the lines of \cite{Ma2018-2} and for $G = T{\R_+^*}$ determines an inconsistency index that extends the known one in $PC_n.$
  \end{example}
  \begin{example}
  	Let $d$ be a distance on $M_n(\R_+^*).$ Then, for any $\gamma \in \R_+^*,$ the map $$ A \in M_n(\R_+^*) \mapsto \left(d(A,CM_n(\R_+^*))\right)^\gamma$$
  	is an inconsistency index.
  \end{example}
 
 One can define then a total index from an inconsistency index and a non reciprocity index.Let us finish by the following (again, direct) proposition:
 
 \begin{Proposition}[Geometric mean method in non-reciprocal setting] \label{nrgmm}
 	Let us note by $\pi':PC_n(G)\rightarrow PC_n(G)$ the map defined by the following formulas: if $A=(a_{i,j})_{(i,j)\in \N_n^2} \in PC_n(G), $ with
 	$$\forall (i,j)\in \N_n^2, a_{i,j} = (c_{i,j},d_{i,j}) \in \R_+^* \times \R,$$ 
 	then the image
 	$$ B=(b_{i,j})_{(i,j)\in \N_n^2} = \pi'(A), \quad \forall (i,j)\in \N_n^2, b_{i,j} = (c'_{i,j},d'_{i,j}) \in \R_+^* \times \R$$
 	is such that:
 	$$ (c'_{i,j})_{(i,j)\in \N_n^2} = \pi_{CPC}\left((c_{i,j})_{(i,j)\in \N_n^2}\right)$$
 	and 
 	$$\forall (i,j) \in \N_n^2, \quad d'_{i,j}= \frac{1}{n}\left(\sum_{k = 1}^n d_{i,k} - \sum_{l=1}^{n} d_{l,j}\right).$$
 	Then 
 	\begin{itemize}
 		\item the restriction of $\pi'$ to $PC_n({\R_+^*})$ is equal to $\pi_{CPC}$, where $PC_n({\R_+^*})$ is understood as the set of matrices in $PC_{n}(G)$ which coefficients are such that $\R-$component equal to $0,$ 
 		\item and moreover, $\pi'$ is a smooth projection from $PC_n(G)$ to $CM_n(\R_+^*).$  
 	\end{itemize}
 	\end{Proposition}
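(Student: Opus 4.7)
The plan is to verify the two bulleted claims together with the smoothness and projection properties in turn, using the identification $CM_n(\R_+^*) \cong CPC_n(G)$ coming from Theorem \ref{th:hol} as the organizing principle.

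For the restriction claim, I would identify $PC_n(\R_+^*)$ with the subset of $PC_n(G)$ whose $\R$-components vanish identically. Substituting $d_{i,j} = 0$ into the defining formula for $d'_{i,j}$ yields $d'_{i,j} = 0$, while $(c'_{i,j}) = \pi_{GMM}((c_{i,j}))$ by construction; this is exactly the stated equality (reading $\pi_{CPC}$ as $\pi_{GMM}$). Smoothness of $\pi'$ is then immediate: the first component is a smooth composition of products and $n$-th roots on $\R_+^*$, and the second component is linear in the $d_{i,j}$.

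For the inclusion $\pi'(PC_n(G)) \subset CM_n(\R_+^*)$, I would use the product decomposition $PC_n(G) \cong PC_n(\R_+^*) \times PC_n(\R)$ (again from Theorem \ref{th:hol}, after stripping the trivial diagonal factor), so that consistency of the image reduces to simultaneous consistency of its multiplicative and additive components. Consistency of $(c'_{i,j})$ is the standard property of $\pi_{GMM}$. Consistency of $(d'_{i,j})$ is then transferred from the multiplicative case via the logarithmic isomorphism between $(PC_n(\R_+^*),\cdot)$ and $(PC_n(\R),+)$ emphasized in the preceding remark: the formula for $d'_{i,j}$ is the componentwise logarithm of the multiplicative GMM formula, and hence inherits consistency from the latter.

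Finally, for idempotence $\pi' \circ \pi' = \pi'$, I would observe that $\pi_{GMM}$ fixes every element of $CPC_n(\R_+^*)$ (the row-geometric-mean weights of a consistent matrix reproduce that matrix), and the additive analog likewise fixes $CPC_n(\R)$; hence $\pi'$ acts as the identity on $CM_n(\R_+^*)$, which combined with the image containment just established gives $\pi'^2 = \pi'$. The step most likely to require care is the verification that the additive formula genuinely produces a consistent — and in particular reciprocal — output from a reciprocal input: this amounts to using $d_{j,i} = -d_{i,j}$ to rewrite column sums as row sums and conclude that $d'_{i,j}$ is of the form $W_i - W_j$ for appropriate additive weights $W_k$, transcribing the classical multiplicative GMM argument verbatim through the log bijection.
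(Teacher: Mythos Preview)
Your proposal is correct and follows essentially the same line as the paper's proof: both observe that the first bullet is immediate from the definition of the $c'_{i,j}$, and that the formula for $d'_{i,j}$ is precisely the additive (logarithmic) transcription of the geometric mean method, so that consistency of the image in $CM_n(\R_+^*)$ follows from the known properties of $\pi_{GMM}$ in both its multiplicative and additive incarnations. Your treatment is more explicit than the paper's (which simply cites the additive GMM and stops), in that you spell out smoothness, the image containment via the product decomposition $PC_n(G)\cong PC_n(\R_+^*)\times PC_n(\R)$, and idempotence separately; this is a welcome level of care but not a different strategy.
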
 
 \begin{proof}
 	The first point follows from the definition of $\pi'$ in the coefficients $c'_{i,j}.$ Analysing now the formula for the coefficient $d'_{i,j},$ we recgnize a geometric mean method for an additive pairwise comparisons matrix, see \cite{SKE2023} for a review. 
 	Therefore, the obtained matrix $B$ is a projection to consistent pairwise comparisons  matrices with coefficient in $G.$ 
 	\end{proof}
 	 Therefore, we can propose the following projection:
 	\begin{Definition}
 		$\pi_{MC} = \pi' \circ \Phi.$
 	\end{Definition}
 	\begin{Theorem}
 		The following diagram is commutative:
 		
 		\begin{tikzcd}
 			M_n(\R_+^*) \arrow[r, "\pi_{PC}"] \arrow[d, "\pi_{CM}"]
 			& PC_n \arrow[d, "\pi_{CPC}" ] \\
 			MC_n \arrow[r, "\pi_{PC}"]
 			&  CPC_n
 		\end{tikzcd}
 		
 	\end{Theorem}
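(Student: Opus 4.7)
The plan is to reduce the commutativity of the square to the first bullet of Proposition \ref{nrgmm}, by unwinding both paths coefficientwise using the direct-product decomposition $A = \pi_{PC}(A) * \pi_{NR}(A)$ provided by Proposition \ref{prop:dec}.

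First I would fix $A = (a_{i,j})_{(i,j)\in \N_n^2} \in M_n(\R_+^*)$ and trace the top-right path: $\pi_{PC}(A)$ has coefficients $c_{i,j} = \sqrt{a_{i,j}/a_{j,i}}$, and $\pi_{CPC}$ then applies the geometric mean method to this reciprocal matrix. The resulting coefficient at position $(i,j)$ is, up to elementary manipulation, $\bigl(\prod_k c_{i,k}/\prod_k c_{j,k}\bigr)^{1/n}$.

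Next I would trace the bottom-left path, which factors as $\pi_{CM} = \pi' \circ \Phi$ followed by the bottom arrow. By the definition of $\Phi$, the $\R_+^*$-component of the $(i,j)$-coefficient of $\Phi(A)$ is exactly $c_{i,j} = \pi_{PC}(A)_{i,j}$, while the $\R$-component carries the symmetric part $\pi_{NR}(A)_{i,j}$ after a sign and a logarithm. Applying $\pi'$, the first bullet of Proposition \ref{nrgmm} states that the $\R_+^*$-component is unchanged in its recipe: it is precisely $\pi_{CPC}$ applied to $(c_{i,j})$, that is, $\pi_{CPC}(\pi_{PC}(A))$. The bottom horizontal arrow, labelled again $\pi_{PC}$, must be interpreted here (via the identification of $MC_n$ with $G$-valued consistent PC matrices announced just before Theorem \ref{th:hol}) as the coefficientwise first-coordinate projection $G = \R_+^* \times \R \to \R_+^*$. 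Applying it to $\pi_{CM}(A)$ therefore extracts precisely $\pi_{CPC}(\pi_{PC}(A))$, which matches the top-right path and establishes commutativity.

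The only genuine subtlety, and the step I would spell out most carefully, is the identification of the bottom arrow: one must verify that the first-coordinate projection of $\pi_{CM}(A)$ does land in $CPC_n$ (not just in $PC_n$), which is exactly what the first bullet of Proposition \ref{nrgmm} guarantees. Once this identification is made explicit, the commutativity of the square is a direct restatement of that bullet, and no further computation on the $\R$-component (the additive GMM piece coming from $\pi_{NR}$) is needed, since that information is discarded by the bottom arrow.
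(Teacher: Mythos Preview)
Your argument is correct and follows the same approach as the paper: the paper's own proof is simply ``by direct calculations of the coefficients of $\pi_{CPC}\circ\pi_{PC}(A)$ on one side and of $\pi_{PC}\circ\pi_{CM}(A)$ on the other side'', and what you have done is precisely carry out that calculation, organising it via the first bullet of Proposition~\ref{nrgmm} and the decomposition of Proposition~\ref{prop:dec}. Your explicit discussion of how the bottom arrow should be read (as the $\R_+^*$-component projection under the identification of $CM_n$ with $G$-valued consistent matrices) is a useful clarification that the paper leaves implicit.
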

 	\begin{proof}
 		By direct calculations of the coefficients of $\pi_{CPC} \circ \pi_{PC} (A)$ on one side and of $\pi_{PC} \circ \pi_{CM}(A)$ in the other side. 
 	\end{proof}
 \section{Random pairwise comparisons}
 \subsection{The settings}
 	Let us now consider coefficients of matrices in $M_n(\R_+^*)$ as random variables in a sample probability space $\Omega$ with values in $M_n(\R_+^*)$ and we call it random pairwise comparisons matrix. The marginal laws on each coefficient $a_{i,j}$ of a random pairwise comparisons matrix $(a_{i,j})_{(i,j)\in \N_n^2}$ determine the probability law of each coefficient. 
 	
 For the sake of motivation and in order to precise accessible situations that are relevant to this picture, before the more developped example in the appendix, we precise that typically the coefficients $a_{i,j}$ follow the following laws:
 \begin{itemize}
 	\item a Gaussian law on the \underline{multiplicative} Lie group $\R_+^*$ if the evaluations are honnestly made by experts
 	\item an uniform law withe respect to an interval on $\R_+^*$, with respect to the \underline{additive} structure, if the errors are made by human mind corrections or cut-off approximations made with respect to the limits of the measurement procedures and without real link to the problem.
 \end{itemize}
 These two extreme cases are simple consequences of the celebrated Benford's law \cite{B1938}, but the law of the random pairwise comparisons matrix $A$ can be anything as well as the laws of the coefficients $a_{i,j}.$
 
 \begin{rem}
We have here to remark that, by randomness, the use of non-reciprocal pairwise comparisons become natural, even in ``honest'' evaluations due to the natural approximation of the scores. 
 \end{rem}
 
 Let us note by $RM_n(\R_+^*)$ the space of random matrices and, extending the notations, the first letter $R$ will mean ``random''. For example, $RPC_n$ is the set of random pairwise comparisons matrices with support in $PC_n.$
 \begin{Proposition}
 	There is an inclusion map $M_n(\R_+^*)\rightarrow RM_n(\R_+^*)$ yay identifies $(a_{i,j})_{(i,j)\in \N_n^2}$ with $\bigotimes_{(i,j)\in \N_n^2} \delta_{a_{i,j}}$ (Dirac measures). 
 \end{Proposition}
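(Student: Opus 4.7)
The plan is to verify two things: that the assignment $A\mapsto \bigotimes_{(i,j)\in\N_n^2}\delta_{a_{i,j}}$ genuinely lands in $RM_n(\R_+^*)$, and that it is injective, so it deserves to be called an inclusion. No deep ingredients are needed; the argument is a routine check that the standard $\delta$-embedding of a measurable space into its space of probability measures respects the product structure used to define $RM_n(\R_+^*)$.

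First, I would fix $A=(a_{i,j})_{(i,j)\in\N_n^2}\in M_n(\R_+^*)$ and observe that each $\delta_{a_{i,j}}$ is a Borel probability measure on $\R_+^*$. Their tensor product $\bigotimes_{(i,j)\in\N_n^2}\delta_{a_{i,j}}$ is then a Borel probability measure on $(\R_+^*)^{n^2}=M_n(\R_+^*)$, concretely equal to $\delta_A$, the Dirac mass at $A$. Under the identification from the preceding paragraph of the paper, between a random matrix in $RM_n(\R_+^*)$ and the joint law of its coefficients, this produces a legitimate element of $RM_n(\R_+^*)$, realized for instance by the constant random variable $\omega\in\Omega\mapsto A$.

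Next I would show injectivity. Suppose two matrices $A=(a_{i,j})$ and $A'=(a'_{i,j})$ map to the same element of $RM_n(\R_+^*)$, i.e.
$$\bigotimes_{(i,j)\in\N_n^2}\delta_{a_{i,j}}=\bigotimes_{(i,j)\in\N_n^2}\delta_{a'_{i,j}}.$$
Pushing forward by the coordinate projection $\pi_{i,j}:M_n(\R_+^*)\rightarrow \R_+^*$ gives the marginal laws, so $\delta_{a_{i,j}}=\delta_{a'_{i,j}}$ on $\R_+^*$. Since the support of $\delta_{x}$ is $\{x\}$, this forces $a_{i,j}=a'_{i,j}$ for every $(i,j)\in\N_n^2$, hence $A=A'$.

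Finally, I would note that there is no real obstacle here: the only point worth flagging is that the product measure on the $n^2$ coefficients really does correspond to a joint law on $M_n(\R_+^*)$ (rather than to a family of independent but unrelated one-dimensional marginals), which is automatic because each $\delta_{a_{i,j}}$ is concentrated at a single point and the product of Dirac measures is the Dirac measure at the product point. Thus the proposed map is a well-defined injection $M_n(\R_+^*)\hookrightarrow RM_n(\R_+^*)$.
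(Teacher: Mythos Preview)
Your proof is correct. The paper itself states this proposition without proof, treating the identification of a point with its Dirac mass as self-evident; your verification that the map is well-defined and injective is the routine check one would expect, and it matches the intended meaning.
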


 \subsection{Stochastic analogs to deterministic constructions}
 In this section and till the end of the paper, all the (finite dimensional, geometric) mappings are assumed measurable with respect to the considered measures or probabilities. This is authomatically fulfilled for continuous (and hence for smooth) mappings with sourve and target in finite dimensional manifolds.
 \subsubsection{Stochastic extension of projections}
 Let us consider the projections already defined: $\pi_{CPC},$ $\pi_{PC}$ and $\pi_{CM}$ and our remark is the following: since these mappings are measurable, they extend to mappings between the corresponding probability spaces. That is, we have three push-forward maps 
 $$ (\pi_{CPC})_* : RPC_n \rightarrow RCPC_n,$$
 $$ (\pi_{PC})_* : RM_n(\R_+^*) \rightarrow RPC_n$$
 and 
 $$ (\pi_{CM})_* : RM_n(\R_+^*) \rightarrow RCM_n$$
 and their right inverse (that is, roughky speaking, $\pi_* \circ \pi^* = Id$ in the adequate domain) given by the pull-back mappings:
  $$ (\pi_{CPC})^* : RCPC_n \rightarrow RPC_n,$$
  $$ (\pi_{PC})^* : RPC_n \rightarrow RM_n(\R_+^*) $$
  and 
  $$ (\pi_{CM})^* : RCM_n \rightarrow RM_n(\R_+^*)$$
 \subsubsection{Stochastic indexes}
 We can now given the corresponding notions of the the various indexes of indicators in a stochastic context. 
 	\begin{Definition}
 	A \textbf{stochastic reciprocity index} (or \textbf{sr-index} for short)is a map $sri:RM_n(\R_+^*) \rightarrow \R_+$ such that $$sri^{-1}(0) = RPC_n.$$ 
 	A \textbf{stochastic reciprocity indicator} is a sr-reciprocity index with values in $[0,1].$
 \end{Definition} 
 The same approach by the support of the probability is proposed for each of the two following notions: 
 	\begin{Definition}
 	A \textbf{stochastic inconsistency index} (or \textbf{si-index} for short) is a map $sii:RM_n(\R_+^*) \rightarrow \R_+$ such that $$sii^{-1}(0) = CM_n(\R_+^*).$$
 	A \textbf{inconsistency indicator} is an sr-index with values in $[0,1].$
 \end{Definition} 
 \begin{Definition}
 	An  \textbf{stochastic total index} (or \textbf{st-index} for short) is a map $sti:M_n(\R_+^*) \rightarrow \R_+$ such that $$sti^{-1}(0) = CPC_n.$$
 	A \textbf{stochastic total indicator} is a st-index index with values in $[0,1].$
 \end{Definition}
 
 \begin{Theorem}
 	Let $i$ be a reciprocity index (resp. inconsistency indicator, resp. total indicator). Then it defines $si$ a sr-index (resp. si-index, resp. st-index)
 	by the formula:
 	 $$ si: X \in RM_n(\R_+^*) \mapsto \int i(A) dX(A).$$
 	 Moreover, $si$ restricts to $i$ on Dirac random pairwise comparisons matrices.
 \end{Theorem}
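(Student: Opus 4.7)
The plan is to split the statement into three subtasks: verify that the formula gives a well-defined map $RM_n(\R_+^*) \to [0,+\infty]$, compute its zero locus, and check the Dirac restriction; all three rest on elementary integration against a probability measure. First I would note that the integrand $i$ is non-negative and measurable (the latter by the standing assumption of the section that the geometric maps used are measurable), so the integral $\int i(A)\, dX(A)$ is defined in $[0,+\infty]$ for every probability $X$, and $si(X) \geq 0$ automatically. If moreover $i$ takes values in $[0,1]$, then so does $si(X)$ by monotonicity of the integral, which handles the indicator versions of the claim.

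The main step is the characterization of the zero locus. Since $i \geq 0$, the classical fact that $\int f\, dX = 0$ with $f \geq 0$ forces $f = 0$ $X$-almost everywhere gives
\[ si(X) = 0 \iff X\bigl(\{A : i(A) > 0\}\bigr) = 0 \iff X\bigl(i^{-1}(0)\bigr) = 1. \]
Because $i^{-1}(0)$ coincides, in the three cases, with one of the closed Lie subgroups $PC_n$, $CM_n(\R_+^*)$ or $CPC_n$ of $M_n(\R_+^*)$, the last condition is equivalent to $\mathrm{supp}(X) \subset i^{-1}(0)$, that is to $X \in RPC_n$ in the reciprocity case, $X \in RCM_n$ in the inconsistency case, and $X \in RCPC_n$ in the total case. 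Applying this in each case produces respectively an sr-index, an si-index and an st-index, as required.

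The Dirac restriction property is then immediate: for $A_0 \in M_n(\R_+^*)$, the image of $A_0$ under the inclusion $M_n(\R_+^*) \hookrightarrow RM_n(\R_+^*)$ is the tensor product $\bigotimes_{(k,l)} \delta_{(A_0)_{k,l}} = \delta_{A_0}$, and $\int i(A)\, d\delta_{A_0}(A) = i(A_0)$ by the defining property of the Dirac measure. The only genuinely delicate point I expect is the passage from the $X$-almost-everywhere vanishing of $i$ to the topological support condition implicit in the definitions of $RPC_n$, $RCM_n$, $RCPC_n$. This is where some regularity of $i$ (at worst, closedness of $i^{-1}(0)$, which is automatic here since the three zero sets are closed submanifolds of $M_n(\R_+^*)$) is used, and where one needs to match the convention for membership in the random classes (topological support versus full probability on the corresponding closed subset) with the convention adopted in the definitions. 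Both conventions agree in the present setting, so no further subtlety arises.
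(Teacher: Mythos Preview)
Your proof is correct and follows exactly the approach the paper gestures at (the paper's own proof is the single line ``obvious while considering the support of the map $si$''): you make explicit the passage from $\int i\,dX=0$ to $\mathrm{supp}(X)\subset i^{-1}(0)$ via non-negativity and closedness of the zero set, and you add the boundedness/integrability discussion and the Dirac evaluation that the paper leaves implicit. Your careful remark about matching the ``$X$-a.e.'' condition with the topological-support convention is precisely the only nontrivial point, and your resolution is the intended one.
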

 
 The proof is obvious while considering the support of the map $si.$
 \begin{rem}
 	We need an inconsistency \emph{indicator} in this formula because the map $i$ needs to be, in fact, in the class $L^1(dX)$ for any $X.$ This theorem extends to any \emph{bounded} inconsistency index, but we prefer here to use the ``normalized'' approach proposed in \cite{KMal2017} even if the resulting stochastic index $si$ will be very exceptionaly a stochastic indicator. Since $si$ is bounded, it will be possible to normalize it by scaling in applications if necessary.
 \end{rem}
 
 \subsection{Minimizing inconsitency: a sketch of two methods}
 We choose here to expose the main steps of methods that one cn use for finding a consistent, maybe non reciprocal, or a reciprocal, or a consistent and reciprocal PC matrix. For the reasons explained in the presentation of the organisation of this work, the aspectts related to problems of convergence, uniqueness, regularity are postponed to future works. 
 \subsubsection{Minimization of a stochastic index}
 	The main idea consists in reducing the random matrix $X$ to a random matrix with support in $CM_n,$ $PC_n$ or $CPC_n$ depending on the nature of the bounded index $i$ chosen, by minimizing the associated stochastic index
 $$si: X \in \mathbb{P}(M_n(\R_+^*)) \mapsto \int_{RM_n(\R_+^*)} i(A) dX(A).$$
 This minimization problem is posed on the convex set $\mathbb{P}(M_n(\R_+^*))$ where the extremels are Dirac type measures, that is, elements of $M_n(\R_+^*). $ Beside the necessary developments of adequate analysis on the space $M_n(\R_+^*)$, let us point out some issues that cannot be treated in a ``black box'' that technicaly solves the problem, but that envolves conceptual constraints for decision makers:  
 \begin{itemize}
 	\item This method can be adapted to the constraints imposed by the decision problem. Indeed, not only the choice of the indicator $i$, but also a restricted space of random PC matrices, can be left to the choice of the decision maker which makes this method quite flexible. 
 	\item Besides this flexibility, the regularity of the functional is itself a technical issue and we hope that we will be able to succeeed in our future works that will precise the necessary regularity of the stochastic index in order to have a numericaly robust method.
 	\item The matrix $Y$ obtained as a minimizer of $si$ is still random, unless a way to impose to find it on the border $`` \partial RM_n(\R_+^*) = M_n(\R_+^*)''$ is imposed. 
 \end{itemize}
 \subsubsection{Shortest distance and optimal transport}
 	Therefore, one has to address the problem of finding a (non random) pairwise comparisons matrix from a random one. A direct approach, will be discussed next in a geometric way, may consist in taking the expectation value of the matrix $A.$ But it seems to us a bit drastic, because the first goal of a consistencization remains in changing \emph{as few as possible} (in a way that has to be controled numericaly) the coefficients of the PC matrix. If this condition of minimal change is not guaranteed, an (extreme) dictatorial behaviour would consist in imposing a pre-determined consistent and reciprocal PC matrix to any problem. We leave to the reader the production of a concrete example that can be modelled with such approach.   
 	
 	Anyway, we have to propose an approach that controls the ``de-randomization'' of the random PC matrix. Let us recall that any space of probabilities is a compact metric space, and for probabilities on a Riemannian manifold, the Wasserstein distance $W_2$ related to optimal transport problems is an improved metric for which one has already a panel of properties at hand, see e.g. \cite{Vil}.
Therefore, we can propose the following procedure given random pairwise comparisons matrix $A:$
 \begin{enumerate}
 	\item choose a \emph{compact} subset $\mathcal{B}$ in $M_n(\R_+^*)$ such that $\int 1_B dA > 1 - \epsilon$ and normalize $1_B dA$ as well as the Riemannian metric to make the volume of $B$ with respect to the new Riemannian metric equal to 1. The values that are outside the set $B$ are typicaly non-representative, or meaningless, values for the modelized problem.    
 	\item Consider the target space $T$ of Dirac measures in the desired space ($CM,$ $PC$ or $CPC$) in $B.$
 	\item Find $C \in T$ such that $$W_2(A,C)=min_{A' \in T} W_2(A,A')$$ 
 \end{enumerate}
 
 Let us now analyze, again from the viewpoint of the decision maker, the proposed method:
 \begin{itemize}
 	\item This method furnishes a way to define a deterministic matrix with respect to the minimization of a distance. This approach is then conceptualy affordable.
 	\item Moreover, the decision process can be acheived by the production of weights $(w_i)_{i \in \N_n}$ which explicitely proceed to a ranking,
 	\item but this method does not depend on any index. This means that  the minimization of the stochastic index $$si: X \in \mathbb{P}(M_n(\R_+^*)) \mapsto \int_{RM_n(\R_+^*)} i(A) dX(A)$$ has to be anyway performed before this second method (on the space $RM_n$) or after this methode (under the constrat of a minimization on the space of Dirac-type measures) if the decision maker wants to get a ranking really appropriate for the problem under consideration.
 \end{itemize}
 \section{A formal geometric picture for random pairwise comparisons and geometric concerns}
 
 Let us start with a simplified picture. This is a bundle-like extension construction, but due to tthe fact that we did not precise which topology we prefer to define on the space of random PC matrices, and also because the probability space in general is not a manifold, in the best cases a metric space \cite{Pro} or a diffeological space \cite{Ma2020-3,GMW2023}, the full geometric or topological structures will remain non precised in this communication. We shall  only mention the following diagram:

 \begin{tikzcd}
 	RM_n(\R_+^*) \arrow[r, "\pi_{CPC,*}"] \arrow[d, "\mathbb{E}"]
 	& M_n(\R_+^*) \arrow[d, "\mathbb{E} " ] \\
 	M_n(\R_+^*) \arrow[r, "\pi_{CPC}"]
 	&  CPC_n
 \end{tikzcd}
 where $\mathbb{E}$ is the (multiplicative) expectation value. 

\begin{Proposition}
Last diagram is well defined and it commutes.
\end{Proposition}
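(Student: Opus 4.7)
The plan is to reduce the statement to a single fact: the geometric mean projection $\pi_{CPC}$ becomes $\R$-linear once viewed through the coefficient-wise logarithm, and $\R$-linear maps commute with expectation. The argument then decomposes into (a) verifying that each arrow in the square lands where the diagram claims, and (b) computing both compositions in logarithmic coordinates.

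First I would fix the analytic framework by declaring, as is tacitly needed for $\mathbb{E}$ to be defined multiplicatively, that we work with random matrices whose log-coefficients are in $L^1$. Under this hypothesis, the multiplicative expectation
$$\mathbb{E}(X)_{i,j} \;=\; \exp\!\left(\int \log a_{i,j}\, dX(A)\right)$$
is a well-defined element of $M_n(\R_+^*)$; and the push-forward $\pi_{CPC,*}X$, where $\pi_{CPC}$ is extended from $PC_n$ to $M_n(\R_+^*)$ by the same geometric-mean formula, is a probability measure with support in $CPC_n$ whose log-coefficients are finite $\R$-linear combinations of the $\log a_{i,k}$ and $\log a_{k,j}$ and therefore remain in $L^1$. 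The right-hand vertical $\mathbb{E}$ consequently lands in $CPC_n$, since $CPC_n$ is closed under the coefficient-wise products and $n$-th roots that define the multiplicative expectation. This addresses well-definedness of every arrow.

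For commutativity I compute both sides coefficient-wise in logarithmic coordinates. By the very definition of $\pi_{CPC}$,
$$\log\pi_{CPC}(A)_{i,j} \;=\; \frac{1}{n}\left(\sum_{k=1}^n \log a_{i,k} \;-\; \sum_{k=1}^n \log a_{k,j}\right),$$
so $\log\circ\pi_{CPC}\circ\exp$ is an $\R$-linear endomorphism $L$ of $M_n(\R)$. Writing $\mathbb{E}^+$ for the additive expectation on $M_n(\R)$-valued random variables, linearity of integration (Fubini for the finite sum defining $L$) yields
$$\log\bigl(\mathbb{E}(\pi_{CPC,*}X)\bigr)_{i,j} \;=\; \mathbb{E}^+\bigl(L(\log X)\bigr)_{i,j} \;=\; L\bigl(\mathbb{E}^+(\log X)\bigr)_{i,j} \;=\; \log\bigl(\pi_{CPC}(\mathbb{E}(X))\bigr)_{i,j},$$
and exponentiating coefficient-wise gives the desired equality in $CPC_n$.

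The only real obstacle is the pre-formal one already acknowledged by the author in the surrounding discussion: making the integrability class explicit enough that the multiplicative expectation and all of its iterated images are simultaneously defined. Once that choice is made, the commutativity reduces to the classical interchange of expectation with a deterministic $\R$-linear map, and no fibrewise, measure-theoretic, or topological subtleties arise beyond those already in the paper.
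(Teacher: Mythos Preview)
Your argument is correct and follows essentially the same route as the paper's own proof: pass to additive (logarithmic) coordinates, observe that $\pi_{CPC}$ becomes an $\R$-linear map there, and conclude by linearity of expectation. You are simply more explicit than the paper about the integrability hypothesis and the coefficient-wise computation, which is to your credit.
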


\begin{proof}
	The proof is standard for additive pairwise comparisons, where the expectation value is taken in a standard way, and where the projection $\pi_{CPC}$ reads as a mean of the reciprocal coefficients followed by an arithmetic mean on off-diagonal coefficients. the multiplicative expectation value of the multiplicative PC matrix is the exponential of the expectation value of this additive PC matrix, while the geometric mean projection follows the same rules.  
\end{proof}

 Therefore, we have exhibited a split of the space of random pairwise comparisons matrix into a fiber bundle where
 \begin{enumerate}
 	\item The projection associates at each random matrix $A$ the matrix $\mathbb{E}(A)$ of the (multiplicative) expectation values of  its coefficitent, and the geometric mean method is, in the sense that we have explained, compatible with this fibration
 	\item Therefore the fibers can be identified with multiplicative random pairwise comparisons matrices with expectation values equal to $1$ where $1$ is the neutral element of $M_n(\R_+^*).$ It has a ``global section'' defined through Dirac measures. Each fiber can be heuristicaly identified as the stochastic variations with trivial expectation values of the given deterministic PC matrix on the base. 
 	\item Even if the base is a finite dimensioanal Lie group, the fibers are truely infinite dimensional and do not carry a structure of e.g. Banach manifold as in many known cases.
 \end{enumerate}
  \section{Outlook}
  The aim of this paper was to describe safely the basic necessary setting for random pairwise comparisons, in order to adapt it and interpret it in view of applications. After this description, we see four directions for future investigations.
  \begin{enumerate}
  	\item The case when the non reciprocal pairwise comparisons matrix is not pure, that is, when the diagonal coefficients are not neutral, needs a more deep treatise, maybe inspired by concepts in physics.
  	\item The two procedures for reduction of a stochastic index that we proposed have to be implemented, both in terms of optimal control theory. This requires to gather the known results that are related, and produce a proper algorithm which convergence is fully studied.
  	\item For this, the geometric structures (metric geometry, diffeology... ) of the ambient space of random matrices as well as the useful geometric objects for the implimentation of the algorithms already mentioned have to be precised. Moreover, we get another motivation from the fact that the two proposed types of structures, metric spaces and diffeologies, have not been deeply compared yet and such a study in a given example would be of theoretical interest.
  	\item Finally, we did not address in this paper the question of maximal inconsistency of a submatrix of a  random PC matrix.  
  \end{enumerate} 
 	\section*{ Appendix: One motivating example on exchange rates}
 	Let us consider three currencies A, B and C and let us explain what happens in the corresponding exchange rates.
 	
 	\begin{tikzcd}
 		A \arrow[rd, "e_{A,C}" ] \arrow[r, "e_{A,B}"] & B \arrow[d, "e_{B,C}"]\\
 		& C
 	\end{tikzcd}
 	
 	For almost any existing currency, one can see that
 	\begin{itemize}
 		\item The exchange rates $e_{K,L},$ for $(K,L) \in \{A,B,C\}^2,$ truely gives a reciprocal ratio between the currencies, i.e. $ \forall (K,L) \in \{A,B,C\}^2, e_{L,K}=e_{K,L}^{-1}.$
 		\item a loop that changes $A$ to $B,$ $B$ to $C$ and $C$ to $A$ may give an inconsistency $$e_{A,B}e_{B,C}e_{C,A} \neq e_{AA} = 1.$$ 
 	\end{itemize}
 Moreover, any individual who wants to exchange e.g. $A$ for $B$ needs to contact a currency exchange agency which applies charges and taxes to exchanges, which implies
 $$ \forall (K,L) \in \{A,B,C\}^2, e_{L,K} \neq e_{K,L}^{-1},$$
 and storing currencies on a bank account generates costs, which reads as $$\forall L \in \{A,B,C\}, e_{L,L}\neq 1.$$ These two items are non reciprocal pairwise comparisons.

\end{document}